\newcolumntype{L}[1]{>{\raggedright\let\newline\\\arraybackslash\hspace{0pt}}m{#1}}
\newcolumntype{C}[1]{>{\centering\let\newline\\\arraybackslash\hspace{0pt}}m{#1}}
\newcolumntype{R}[1]{>{\raggedleft\let\newline\\\arraybackslash\hspace{0pt}}m{#1}}
\DeclareMathOperator*{\esssup}{ess\,sup}
\DeclareMathOperator*{\essinf}{ess\,inf}
\numberwithin{equation}{section}
      \theoremstyle{plain}
\newtheorem{theorem}{Theorem}[section]
\newtheorem{prop}[theorem]{Proposition}
\newtheorem{lemma}[theorem]{Lemma}
\def\CC{\mathbb C}
\def\NN{\mathbb N}
\def\ZZ{\mathbb Z}
\def\RR{\mathbb R}
\begin{document}
\title[Gabor frames generated by Random-Periodic time-frequency shifts]{Gabor frames generated by Random-Periodic time-frequency shifts}
\author[R. Sarthak]{Sarthak Raj}
\address[R. Sarthak]{Department of Mathematics, Indian Institute of Technology Delhi, India}
\email{sarthakraj.math@gmail.com}
\author{S. Sivananthan}
\address[S. Sivananthan]{Department of Mathematics, Indian Institute of Technology Delhi, India}
\email{siva@maths.iitd.ac.in}
\begin{abstract}
    In this article, we consider a variation of the existence of Gabor frames in a probabilistic setting, in which we consider time-frequency shifts taken over random-periodic sets. We demonstrate that the method of selecting random-periodic time-frequency shifts is successful with high probability for specific categories of well-behaved functions, notably including Hermite functions, totally positive functions, and B-spline functions. In particular, we show that if $x_1, x_2, \ldots ,x_m$ are independent and uniformly  distributed in $[0,1),$ with $m$ sufficiently large, then the set of time-frequency shifts $\Lambda \times \ZZ, $ where $\Lambda=\ZZ + \{x_1, x_2, \ldots, x_m\},$ forms Gabor frame with high probability.
\end{abstract}
\keywords{Gabor frame, Zak Transform, Random time-frequency shifts, Moderate decay functions}
\subjclass{42C15, 42A61, 42C20, 42C40}
\maketitle

\section{Introduction}
Ever since its inception, Gabor frames have been not only of just theoretical importance but also a tool of vital importance in different fields, with myriad applications. The existence of Gabor frames with ``nice" generators under certain ``time-frequency" shifts has been a problem that has been considered under different variations. We've considered one such variation of the existence of the Gabor frame problem, in which we establish the frame property under a choice of time-frequency shifts that are periodically random, yet structurally simple. We demonstrate that, with high probability, for generators meeting specific decay conditions, selecting a sufficient number of points from the unit interval and applying integer shifts at these points, when coupled with integers, results in time-frequency shifts that constitute a Gabor frame.

\subsection{Related Literature} Given a Hilbert space $H,$ one question that is asked more often then not is given an element in $H$ can we get a representation of it out of a sequence of elements in $H?$ The motivation of representing something usually originates out of our natural instincts, which compels us to see something as a combination of known things. One answer to this is given by, what we call, an orthonormal basis. Using orthonormal basis, one gets an immediate representation (unique) of any element in $H.$ On one hand, an orthonormal basis gives us an easy way to represent an element; there is a flip side to it. When it comes to real-world applications, the structure of orthonormal basis is very stringent, and often it is required to have some flexibility when applications are considered. The notion of frames overcomes this difficulty. 

A sequence $\{h_{n}\}_{n \in \NN}$ of elements in $H$ is said to form a frame for $H$, if there are constants $ 0<A<B $ such that
\begin{equation*}
    A\|h\|^{2} \leq \sum_{n \in \NN} |\langle h,h_{n} \rangle|^{2} \leq B\|h\|^{2}, \hspace{0.5cm}\forall h \in H.
\end{equation*}
It is not apparent from the definition, but frames provide a robust basis-like representation of elements in $H.$ These representations are generally non-unique, and this is what makes them more suitable for application purposes. The concept of frames was introduced by Duffin and Schaeffer \cite{dnsframe} in 1952 for studying non-harmonic Fourier series. It didn't pick up as much attention as it did after it was reintroduced by Young in his book \cite{youngboook}. Since then, it has found wide applications in a variety of fields. Notably, they are used in sampling theory \cite{AkGroNonUniSampSiam, Ortega}, wavelet theory \cite{PNEID,TLWID}, sparse approximation \cite{nonlinappnielsen}, pseudodifferential operators \cite{GroHeilMSPDO}, data transmission with loss \cite{goyalframewither}, among many other applications. 

Gabor frames are a particular type of frame. The Gabor systems are generated by two fundamental operators: 
\begin{equation*}
    \mbox{\textbf{Translation:} for any } a \in \RR, T_{a}: L^{2}(\RR) \rightarrow L^{2}(\RR), T_{a}f(x)=f(x-a),
\end{equation*}
\begin{equation*}
    \mbox{\textbf{Modulation:} for any } b \in \RR, M_{b}:L^{2}(\RR) \rightarrow L^{2}(\RR), M_{b}f(x)=e^{2 \pi i b x} f(x).
\end{equation*}
Let $ \Lambda $ be a sequence of points in $\RR^{2},$ then the \textit{time-frequency} shifts of $g \in L^{2}(\RR)$ with respect to $\Lambda $ is defined to be $M_{v}T_{u}g$ for $(u,v) \in \Lambda.$ Then the Gabor system with respect to $\Lambda$ is defined to be 
$$\mathcal{G}(g, \Lambda ):=\{M_{v} T_{u}g: (u,v) \in \Lambda \}=\{e^{2 \pi i v \cdot}g(\cdot - u): (u,v) \in \Lambda\}.$$
When the set $\mathcal{G}(g,\Lambda)$ forms a frame for $L^{2}(\RR),$ then it is called a Gabor frame.

Historically, one choice of $\Lambda$ has been of particular interest to researchers in Gabor analysis. This choice is rectangular lattices, which is  $\Lambda = \alpha \ZZ \times \beta \ZZ,$ with $\alpha, \beta >0,$ and collection of all such $\alpha, \beta > 0$ such that $\mathcal{G}(g, \alpha \ZZ \times \beta \ZZ )$ forms a frame is called the frame set for $g.$ 

It was first claimed by von Neumann \cite{voneumanncompleteness} that the set of time-frequency shifts of the Gaussian window function $\varphi(t)=2^{1/4} e^{-\pi t^2}$ along $\Lambda = \ZZ^2$ is complete in $L^2(\RR),$ i.e., the linear span of $\mathcal{G}(\varphi,\Lambda)$ is dense. Dennis Gabor furthered this claim and conjectured that every function $f \in L^{2}(\RR)$ can be represented as following: 
$$f =\sum_{k,n \in \ZZ} c_{k,n} M_{k}T_{n} \varphi.$$ 
Janssen proved that such a series expansion exists, but the convergence holds in the sense of tempered distributions, not in $L^{2}-$norm \cite{janssengaborrep}.

The theory of Gabor analysis is extensive but the central question of Gabor analysis is how to determine the frame set for a function in $L^{2}(\RR).$ This innocuous-looking question is highly non-trivial. The necessary condition $\alpha \beta \leq 1 $ is well known, but the sufficient condition had been proven only for a handful of cases. It was proved for Gaussian by Seip and Wallst\'en \cite{SeipDensitythm1, SeipDensitythm2} and Lyubarski\u{\i} \cite{Lyubarskigaussian}. It has also been proved for hyperbolic secant \cite{janssenhyperbolicsecant}, two sided exponential \cite{janssentwosidedexp}, totally positive functions of finite type \cite{Grochenigtpfinite}, totally positive functions on rational lattices \cite{Grochenigtpoverrationallattice}. Based on these observations, Gr\"ochenig \cite{grochenigmysgabframes} conjectured the following : 
$$\mbox{For even Hermite functions, the frame set is } \{(\alpha, \beta) : \alpha \beta < 1\} \mbox{, and }   $$
$$\mbox{for odd Hermite functions, the frame set is } \{(\alpha, \beta) : \alpha \beta < 1, \alpha \beta \neq 1-\frac{1}{N}, N=2,3,\ldots \}.$$
The conjecture was proved to be false by Lemvig \cite{Lemvigcounterhermite} for the cases of Hermite functions with $n=4m+2$ and $4m+3.$ He further gave numerical evidence for the case $n=4$ and $5$ to be false. This led him to believe that the conjecture would fail for the cases $n=4m$ and $4m+1.$ It was not until very recently that the conjecture was proved to be false for the remaining cases too, except for the case $n=1$, by Lemvig et al. \cite{Lemvigetalcounterhermite}.

A common thread that runs through all these results is that they are all existence results, i.e., they establish the frame property, but they fail to provide explicit frame constants.

In this article, we take a probabilistic approach to the problem. Rather than relying on a rectangular lattice, we introduce semi-regular sampling patterns, which we refer to as random-periodic patterns. This approach is motivated by the negative results concerning the class of Hermite functions and our belief that selecting a sufficient number of points in the unit interval—then shifting them along integers on an axis and coupling it with integers—will produce a sufficiently dense pattern to exhibit frame behavior, at least for well-behaved functions. We were able to prove that this happens with high probability under certain assumptions on the window function. Also, at the expense of deterministic setting, we provide explicit frame constants. 

Recently, Romero et al. \cite{romerorandomperiodic} established a sampling result for shift-invariant spaces using such random-periodic sampling patterns. They considered  signals spanned by the integer shifts of a set of generating functions with distinct frequency profiles and demonstrated that sampling on a random-periodic set enables high probability recovery, provided that the sampling density exceeds the number of frequency profiles by a logarithmic factor. This framework, in particular, includes bandlimited functions with multi-band spectra.
\subsection{Organisation}
This article is organized as follows. Section 2 provides the necessary background for the subsequent sections. Specifically, we begin with a brief overview of the Zak transform, which plays a crucial role in the main result, followed by an introduction to Hoeffding's inequality. Section 3 is dedicated to proving the main result. Finally, in Section 4, we discuss relevant examples.  
\section{Preliminaries}

\subsection{Fourier Transform} The Fourier transform of $f \in L^{1}(\RR)$ is defined by the function 
\begin{equation*}
    \mathcal{F}(f)(\xi)=\widehat{f}(\xi):= \int_{\RR} f(t)e^{- 2 \pi i \xi t} dt, ~ \xi \in \RR.
\end{equation*}
 
This definition is then extended, in its usual sense, to define the Fourier transform for the functions in $L^{2}(\RR).$

\subsection{The Zak Transform}
In this subsection, we study a classical transform in time-frequency analysis called the Zak transfrom.   The Zak transform of a function $f \in L^{2}(\RR)$ is formally defined as a function of two variables through the series:
\begin{equation}
\label{defzak}
    (Zf)(t,\xi)=  \sum_{k \in \ZZ}f(t+k)e^{2 \pi i k \xi}, \hspace{0.5cm} t,\nu \in \RR.
\end{equation}
This transform is strikingly informative in Gabor analysis. In fact, in certain cases, it is possible to give complete characterizations for the Gabor systems forming bases (orthonormal, Riesz) using just the Zak transform \cite{oleframes}.\par
Although, in the present context, it is customary to refer to it as the Zak transform, its inception can be traced back to the work of Gelfand \cite{gelfandeigenex} on eigenfunction expansions associated with Schrödinger operators with periodic potentials. Subsequently, it has been rediscovered in many different contexts. Weil \cite{weiltransform} defined it on arbitrary locally compact abelian groups with respect to arbitrary closed subgroups. J. Zak \cite{zakkqrep} used it in solid-state physics, and there he referred to it as k-q representation. \par
While interpreting equation (\ref{defzak}), we have to be careful. For instance, the series on the right-hand side of (\ref{defzak}) converges pointwise if the function $f$ is taken to be a continuous function with compact support, but for a general function, $f \in L^{2}(\RR),$ more precise arguments have to be made. To this end, we mention the following result, which asserts that the series defining $Zf$ converges in $L^{2}(Q)$, where $Q:=[0,1)\times [0,1).$
\begin{theorem}
     $Z$ defines a unitary map from $L^{2}(\RR)$ onto $L^{2}(Q).$
\end{theorem}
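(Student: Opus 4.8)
The plan is to establish the two defining properties of a unitary map separately: that $Z$ is an isometry, i.e.\ $\|Zf\|_{L^2(Q)} = \|f\|_{L^2(\RR)}$, and that $Z$ is surjective onto $L^2(Q)$. Since the series (\ref{defzak}) converges pointwise only for sufficiently nice functions, I would first prove the isometry on a dense subspace where everything is unambiguous --- for instance $C_c(\RR)$, the continuous functions with compact support --- and then extend by continuity.

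First I would fix such an $f \in C_c(\RR)$, for which the sum defining $Zf$ is locally finite and hence a genuine continuous function on $Q$. For each fixed $t$, the map $\xi \mapsto (Zf)(t,\xi) = \sum_{k\in\ZZ} f(t+k) e^{2\pi i k \xi}$ is precisely the $1$-periodic function on $[0,1)$ whose $k$-th Fourier coefficient is $f(t+k)$. Parseval's identity for Fourier series then gives, fiber by fiber,
\begin{equation*}
\int_0^1 |(Zf)(t,\xi)|^2 \, d\xi = \sum_{k\in\ZZ} |f(t+k)|^2 .
\end{equation*}
Integrating in $t$ over $[0,1)$ and using the disjoint tiling $\RR = \bigsqcup_{k\in\ZZ}[k,k+1)$ to recombine the sum into a single integral yields
\begin{equation*}
\|Zf\|_{L^2(Q)}^2 = \int_0^1 \sum_{k\in\ZZ}|f(t+k)|^2\, dt = \int_\RR |f(t)|^2\, dt = \|f\|_{L^2(\RR)}^2 .
\end{equation*}
Thus $Z$ is isometric on a dense subspace, and by the bounded linear extension theorem it extends uniquely to an isometry on all of $L^2(\RR)$; this is exactly what legitimizes interpreting $Zf$ as an $L^2(Q)$-limit for general $f$.

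It remains to prove surjectivity. The functions $e_{m,n}(t,\xi) := e^{2\pi i m t} e^{2\pi i n \xi}$, $m,n\in\ZZ$, form the standard orthonormal basis of $L^2(Q)$, so it suffices to exhibit each $e_{m,n}$ in the range of $Z$. Taking $g = M_m T_n \chi_{[0,1)}$ and computing directly, for $t \in [0,1)$ only the index $k=n$ survives in the defining sum, and one finds $Z(M_m T_n \chi_{[0,1)})(t,\xi) = e_{m,n}(t,\xi)$. Since an isometry has closed range, and this range contains a complete orthonormal set, it must be all of $L^2(Q)$; hence $Z$ is onto and therefore unitary.

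The only genuine subtlety --- and the step I would treat most carefully --- is the passage from the dense subspace to all of $L^2(\RR)$: the series (\ref{defzak}) is merely formal for arbitrary $f\in L^2(\RR)$, so on the extended operator the identity $\|Zf\|_{L^2(Q)} = \|f\|_{L^2(\RR)}$ must be read as a statement about the continuous extension rather than about pointwise values of the defining sum. By contrast, the two displayed computations are routine applications of Parseval's theorem and Tonelli's theorem, and the surjectivity argument reduces to a one-line basis check once the isometry is in hand.
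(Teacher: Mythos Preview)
The paper does not actually prove this theorem; it is quoted as a known preliminary fact, with the reader referred to the textbooks cited at the end of the subsection on the Zak transform. Your argument is correct and is precisely the standard textbook proof: establish the isometry on $C_c(\RR)$ via Parseval's identity for Fourier series and the tiling $\RR=\bigsqcup_k [k,k+1)$, extend by density, and then verify surjectivity by computing $Z(M_mT_n\chi_{[0,1)})=e_{m,n}$ so that the closed range of the isometry contains an orthonormal basis of $L^2(Q)$. There is nothing to compare against in the paper itself.
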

Sometimes it is required to move from the Zak transform of a function in the time domain to the Zak transform of its Fourier transform. In general, one cannot expect such a transition, but under sufficient decay conditions, it is possible. One such result is the following: 

\begin{prop}
\label{zakswitch}
    Let $W(\RR):=\{f: \RR \rightarrow \CC : \esssup_{x \in [0,1]} \sum_{k \in \ZZ}|f(x-k)| < \infty \}$ denote the Wiener-Amalgam class of functions. If $ f, \widehat{f} \in W(\RR)$ (hence $f, \widehat{f}$ are continuous), then 
    \begin{equation*}
        Zf(t,\xi)=e^{2 \pi i t \xi}Z\widehat{f}(\xi,-t), \hspace{0.5cm}\mbox{ for all } (t, \xi) \in \RR^{2}.
    \end{equation*}
\end{prop}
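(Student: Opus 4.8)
The plan is to reduce the identity to a single application of the Poisson summation formula, with the two Wiener--amalgam hypotheses serving precisely to guarantee that every series involved converges absolutely and that Poisson summation holds pointwise at \emph{every} $(t,\xi)$ rather than merely almost everywhere.

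First I would record that membership $f,\widehat{f}\in W(\RR)$ makes both Zak series absolutely convergent for every $(t,\xi)\in\RR^2$: the sum $\sum_{k\in\ZZ}|f(t+k)|$ is bounded uniformly in $t$ by the $W(\RR)$-bound of $f$, and the analogous sum for $\widehat{f}$ is bounded by the $W(\RR)$-bound of $\widehat{f}$ (writing $n-\xi$ as an integer shift of the fractional part of $-\xi$). Hence both $Zf(t,\xi)$ and $Z\widehat{f}(\xi,-t)$ are well defined, and — together with the continuity of $f,\widehat{f}$ — they are continuous functions of $(t,\xi)$, so it suffices to verify the pointwise equality.

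Next, fixing $(t,\xi)$, I would apply Poisson summation to the auxiliary function $\phi(x):=f(t+x)\,e^{2\pi i\xi x}$, whose values at the integers are exactly the summands of $Zf(t,\xi)$. A change of variable $u=t+x$ in the integral defining $\widehat{\phi}$ gives, for each $n\in\ZZ$,
\[
\widehat{\phi}(n)=\int_{\RR} f(t+x)\,e^{-2\pi i(n-\xi)x}\,dx=e^{2\pi i(n-\xi)t}\,\widehat{f}(n-\xi),
\]
so that Poisson summation yields
\[
Zf(t,\xi)=\sum_{k\in\ZZ}\phi(k)=\sum_{n\in\ZZ}\widehat{\phi}(n)=\sum_{n\in\ZZ}e^{2\pi i(n-\xi)t}\,\widehat{f}(n-\xi).
\]
Pulling the $(t,\xi)$-dependent unimodular factor out of the sum and reindexing the remaining series so that it matches the definition of the Zak transform of $\widehat{f}$ then produces the asserted identity.

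The only genuinely delicate point — and the step I would treat most carefully — is the legitimacy of the pointwise Poisson identity $\sum_{k}\phi(k)=\sum_{n}\widehat{\phi}(n)$ at \emph{every} $(t,\xi)$, which is exactly what the phrase ``for all $(t,\xi)\in\RR^2$'' demands. This is where both hypotheses are used simultaneously: $f\in W(\RR)$ forces the periodization $P\phi(y)=\sum_{k}\phi(y+k)$ to converge uniformly to a continuous $1$-periodic function, while $\widehat{f}\in W(\RR)$ forces $(\widehat{\phi}(n))_{n}\in\ell^1(\ZZ)$, so the Fourier series of $P\phi$ converges absolutely and uniformly and therefore agrees with $P\phi$ everywhere; evaluating at $y=0$ identifies $\sum_{k}\phi(k)$ with $\sum_{n}\widehat{\phi}(n)$. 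I would invoke the standard Wiener--amalgam criterion for pointwise Poisson summation here, since this is the one place where the argument is not purely formal manipulation of absolutely convergent sums.
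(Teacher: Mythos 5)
The paper itself gives no proof of this proposition (it is quoted from the textbook literature with a citation), so the only meaningful comparison is with the standard argument --- and your overall strategy, namely Poisson summation applied to $\phi(x)=f(t+x)e^{2\pi i\xi x}$ with the two amalgam hypotheses guaranteeing that the periodization of $\phi$ and the sequence $(\widehat{\phi}(n))_n$ are both absolutely summable so that the identity holds at every point, is exactly that argument. Your computation $\widehat{\phi}(n)=e^{2\pi i(n-\xi)t}\widehat{f}(n-\xi)$ is correct, and you rightly identify the pointwise validity of Poisson summation as the only non-formal step.

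The gap is in your last sentence. Pulling the phase out of $\sum_{n}e^{2\pi i(n-\xi)t}\widehat{f}(n-\xi)$ gives $e^{-2\pi i\xi t}\sum_{n}\widehat{f}(-\xi+n)e^{2\pi i n t}$, and the remaining sum is, by the definition (\ref{defzak}), exactly $Z\widehat{f}(-\xi,t)$; no reindexing of $n$ can turn the sample set $\ZZ-\xi$ into $\ZZ+\xi$. So what your computation actually proves is $Zf(t,\xi)=e^{-2\pi i t\xi}\,Z\widehat{f}(-\xi,t)$, which is not the displayed $e^{2\pi i t\xi}Z\widehat{f}(\xi,-t)$, and the two right-hand sides genuinely differ: for the Gaussian $f=\widehat{f}=e^{-\pi x^{2}}$ at $(t,\xi)=(1/2,1/4)$ a direct evaluation gives $Zf(1/2,1/4)\approx 0.4551(1-i)$, which agrees with $e^{-2\pi it\xi}Z\widehat{f}(-\xi,t)$ but not with $e^{2\pi it\xi}Z\widehat{f}(\xi,-t)\approx 0.4551(1+i)$. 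In other words, the identity as printed is consistent with the convention $Zf(t,\xi)=\sum_{k}f(t+k)e^{-2\pi ik\xi}$ rather than with (\ref{defzak}); either you must flag and correct that sign slip, or your assertion that ``reindexing \ldots produces the asserted identity'' is simply false, and a referee checking that step would reject it. A secondary point: the paper's $W(\RR)$ places the supremum outside the sum, whereas the uniform convergence of the periodization $P\phi$ (and hence its continuity, which your Poisson step relies on) requires the supremum inside, i.e.\ $\sum_{k}\sup_{x\in[0,1]}|f(x+k)|<\infty$; you should either invoke that stronger norm explicitly or justify why the weaker condition suffices here.
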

For an introduction on the Zak transform, one can refer to the excellent books \cite{TLWID,grobook}.
\subsection{Probability bound for the sum of random variables}
This subsection introduces a technical lemma, which gives a probability bound for the sum of independent random variables that exceeds its expected value by a positive amount. This is famously known as Hoeffding's inequality. 
\begin{lemma}{(Hoeffding's Inequality)} \cite[Theorem 2]{Hoeffdingineq}
\label{hoeffding}
    Let $X_{1}, X_{2}, \ldots, X_{n}$ be independent random variables and $a_{i} \leq X_{i} \leq b_{i}$ for all $i=1,2, \ldots, n.$ If $\overline{X}= \frac{1}{n}\sum_{i=1}^{n} X_{i},$ $\mu = \mathbb{E}(\overline{X}),$ and $t > 0$, then
    \begin{equation}\mathbb{P}(\overline{X}-\mu > t) \leq \exp{\left(\frac{-2 n^2 t^2}{\sum_{i=1}^{n}(b_{i}-a_{i})^{2}}\right)}.\end{equation}
\end{lemma}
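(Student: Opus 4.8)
The plan is to follow the classical exponential-moment (Chernoff) method. The starting point is to convert the tail probability into a moment generating function bound: for any $s > 0$, since $u \mapsto e^{su}$ is increasing, Markov's inequality gives
\[
\mathbb{P}(\overline{X} - \mu > t) = \mathbb{P}\left(e^{s(\overline{X}-\mu)} > e^{st}\right) \leq e^{-st}\,\mathbb{E}\left[e^{s(\overline{X}-\mu)}\right].
\]
Writing $\overline{X} - \mu = \frac{1}{n}\sum_{i=1}^{n}(X_i - \mathbb{E}X_i)$ and using independence of the $X_i$, the expectation on the right factorizes as $\prod_{i=1}^{n} \mathbb{E}\left[\exp\left(\frac{s}{n}(X_i - \mathbb{E}X_i)\right)\right]$, which reduces the problem to controlling the moment generating function of each centered, bounded summand.

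The heart of the argument is the following auxiliary estimate (Hoeffding's lemma): if $Y$ is a random variable with $\mathbb{E}Y = 0$ and $a \leq Y \leq b$ almost surely, then for every $\lambda \in \RR$,
\[
\mathbb{E}\left[e^{\lambda Y}\right] \leq \exp\left(\frac{\lambda^2(b-a)^2}{8}\right).
\]
To prove this I would exploit the convexity of $y \mapsto e^{\lambda y}$: for $y \in [a,b]$ one has the linear upper bound $e^{\lambda y} \leq \frac{b-y}{b-a}e^{\lambda a} + \frac{y-a}{b-a}e^{\lambda b}$, and taking expectations while using $\mathbb{E}Y = 0$ eliminates the dependence on the distribution of $Y$, leaving an explicit function of $\lambda$. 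Setting $p = \frac{-a}{b-a}$ and $h = \lambda(b-a)$, this function can be written as $e^{\varphi(h)}$ with $\varphi(h) = -ph + \log\!\left(1 - p + p e^{h}\right)$; a short computation shows $\varphi(0) = \varphi'(0) = 0$ and $\varphi''(h) \leq \frac14$ for all $h$, so Taylor's theorem with remainder yields $\varphi(h) \leq \frac{h^2}{8}$, which is the claimed bound. I expect this lemma to be the main obstacle, since verifying the uniform bound $\varphi''(h) \leq \frac14$ (equivalently, that a quantity of the form $q(1-q)$ with $q \in [0,1]$ never exceeds $\frac14$) is where essentially all of the technical work concentrates.

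With Hoeffding's lemma in hand I would apply it to each $Y_i = X_i - \mathbb{E}X_i$ with $\lambda = s/n$ and the bounds $a_i - \mathbb{E}X_i \leq Y_i \leq b_i - \mathbb{E}X_i$, whose length is still $b_i - a_i$. This gives
\[
\mathbb{E}\left[e^{s(\overline{X}-\mu)}\right] \leq \prod_{i=1}^{n} \exp\left(\frac{s^2(b_i-a_i)^2}{8n^2}\right) = \exp\left(\frac{s^2}{8n^2}\sum_{i=1}^{n}(b_i-a_i)^2\right).
\]
Combining this with the first display yields $\mathbb{P}(\overline{X}-\mu > t) \leq \exp\left(-st + \frac{s^2}{8n^2}\sum_{i=1}^{n}(b_i-a_i)^2\right)$ for every $s > 0$.

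The final step is to optimize the free parameter $s$. The exponent is a quadratic in $s$ minimized at $s^{\ast} = \frac{4n^2 t}{\sum_{i=1}^{n}(b_i-a_i)^2}$, and substituting this value collapses the exponent to $-\frac{2n^2 t^2}{\sum_{i=1}^{n}(b_i-a_i)^2}$, which is precisely the stated estimate. Since $s^{\ast} > 0$, the Chernoff step is valid for this choice, so no additional case analysis is needed and the proof closes.
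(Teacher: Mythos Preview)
Your argument is the standard Chernoff--Hoeffding proof and is correct as written. Note, however, that the paper does not supply its own proof of this lemma: it is stated as a preliminary tool and attributed to \cite[Theorem 2]{Hoeffdingineq}, so there is nothing in the paper to compare against.
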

\section{Main Results}
To begin this section, we make the following observation: Let $\Lambda $ be translates of integers, i.e., $\Lambda= \ZZ + \{x_{1}, x_{2}, \ldots, x_{m}\}$, where $x_{i} \in [0,1), ~ i=1,2,\ldots, m,$ and let $\mathcal{G}(g, \Lambda \times \ZZ):=\{ M_{k} T_{n+x_i}g : k,n \in \ZZ, i=1,2,\ldots m\}$ be the set containing the time-frequency shifts of a function $g \in L^{2}(\RR)$ with respect to the set $\Lambda \times \ZZ.$
Now, since
\begin{align}
    \begin{split}
        Z(M_{k}T_{n}g)(t,\xi)&=\sum_{l \in \ZZ} M_{k}T_{n}g(t+l) e^{2 \pi i l \xi}\\
        &= \sum_{l \in \ZZ} g(t+l-n) e^{2 \pi i k (t+l)} e^{2 \pi i l \xi}\\
        &= e^{2 \pi i k t} e^{2 \pi i n \xi} Zg(t,\xi),
    \end{split}
\end{align}
which in turn gives
\begin{align}
\begin{split}Z \mathcal{G}(g, \Lambda \times \ZZ ) &= \{ZM_{k} T_{n+x_i}g : k,n \in \ZZ, i=1,2,\ldots m\}\\
&=\{E_{k,n} ZT_{x_i}g : k,n \in \ZZ, i=1,2,\ldots m\},
\end{split}\end{align}
where $E_{k,n}(t,\xi)=e^{2 \pi i k t} e^{2 \pi i n \xi}.$
Using this, we translate the problem of $\mathcal{G}(G, \Lambda \times \ZZ)$ being a frame for $L^2(\RR)$ to a  problem in $L^2(Q).$
\begin{lemma}
\label{equiprob}
    Let $\Lambda= \ZZ +\{x_{1}, x_{2}, \ldots, x_{m}\},$ where $x_{i} \in [0,1)$ for $i=1,2,\ldots, m,$ and $g$ be a function in $L^{2}(\RR).$ If there are constants $A,B > 0$ such that 
    \begin{equation}
    \label{ch2:sufcond}
        A \leq \sum_{i=1}^{m} |Z(T_{x_i}g)(t,\xi)|^{2} \leq B \mbox{ for a.e. } (t,\xi) \in Q,
    \end{equation}   then the Gabor system $\mathcal{G}(g, \Lambda \times \ZZ)$ is a frame for $L^{2}(\RR).$
\end{lemma}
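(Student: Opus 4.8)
The plan is to transport the frame inequality from $L^2(\RR)$ to $L^2(Q)$ through the Zak transform and then read it off from a single application of Parseval's identity. Since $Z$ is a unitary map from $L^2(\RR)$ onto $L^2(Q)$, the system $\mathcal{G}(g,\Lambda\times\ZZ)$ is a frame for $L^2(\RR)$ with bounds $A,B$ if and only if its image $Z\mathcal{G}(g,\Lambda\times\ZZ)=\{E_{k,n}\,ZT_{x_i}g:k,n\in\ZZ,\ i=1,\dots,m\}$ is a frame for $L^2(Q)$ with the same bounds. Concretely, for $f\in L^2(\RR)$ I would set $F=Zf$ and use $\langle f,M_kT_{n+x_i}g\rangle=\langle Zf,Z(M_kT_{n+x_i}g)\rangle=\langle F,E_{k,n}\,ZT_{x_i}g\rangle$ together with $\|f\|=\|F\|$, so that it suffices to verify $A\|F\|^2\le\sum_{i,k,n}|\langle F,E_{k,n}\,ZT_{x_i}g\rangle|^2\le B\|F\|^2$ for every $F\in L^2(Q)$.

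The decisive observation is that $\{E_{k,n}\}_{k,n\in\ZZ}$, with $E_{k,n}(t,\xi)=e^{2\pi i kt}e^{2\pi i n\xi}$, is the standard orthonormal basis of $L^2(Q)$. Writing $G_i:=ZT_{x_i}g$, for each fixed $i$ the inner product $\langle F,E_{k,n}G_i\rangle=\int_Q F\,\overline{G_i}\,\overline{E_{k,n}}$ is exactly the $(k,n)$-th Fourier coefficient of the function $F\overline{G_i}$. Hence Parseval's identity for this basis gives, for each $i$,
\[
\sum_{k,n\in\ZZ}|\langle F,E_{k,n}G_i\rangle|^2=\|F\overline{G_i}\|_{L^2(Q)}^2=\int_Q|F(t,\xi)|^2\,|G_i(t,\xi)|^2\,dt\,d\xi.
\]
Summing over the finitely many indices $i=1,\dots,m$ and interchanging the finite sum with the integral then yields $\sum_{i,k,n}|\langle F,E_{k,n}G_i\rangle|^2=\int_Q|F(t,\xi)|^2\big(\sum_{i=1}^m|G_i(t,\xi)|^2\big)\,dt\,d\xi$, at which point the hypothesis \eqref{ch2:sufcond} supplies the pointwise two-sided bound $A\le\sum_{i=1}^m|G_i|^2\le B$ and immediately sandwiches the expression between $A\|F\|^2$ and $B\|F\|^2$.

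The one point requiring care, and the only place the upper bound is genuinely used beyond furnishing the upper frame constant, is the legitimacy of Parseval's identity: it demands $F\overline{G_i}\in L^2(Q)$. This is precisely where \eqref{ch2:sufcond} pays off, since $\sum_i|G_i|^2\le B$ forces $|G_i|\le\sqrt{B}$ a.e., so each $G_i$ is essentially bounded and $\|F\overline{G_i}\|_{L^2(Q)}^2\le B\|F\|^2<\infty$. With this integrability secured the remaining steps are routine, and transporting the resulting inequality back through the unitary $Z$ completes the argument. I do not anticipate a substantive obstacle beyond bookkeeping; the essential content is the identification of the coefficients as Fourier coefficients of $F\overline{G_i}$ and the boundedness of the $G_i$ provided by the upper frame bound.
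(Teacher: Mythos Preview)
Your argument is correct and essentially identical to the paper's: both transport the inner products through the unitary Zak transform, recognize $\langle Zf,E_{k,n}\,ZT_{x_i}g\rangle$ as the Fourier coefficients of $Zf\cdot\overline{ZT_{x_i}g}$ with respect to the orthonormal basis $\{E_{k,n}\}$ of $L^2(Q)$, apply Parseval, and then invoke the pointwise bounds \eqref{ch2:sufcond}. Your explicit remark that the upper bound guarantees $F\overline{G_i}\in L^2(Q)$, so that Parseval is legitimate, is a welcome point of care that the paper leaves implicit.
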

\begin{proof}
For $f \in L^2(\RR),$ consider the sum
\begin{align*}
    \sum_{i=1}^{m} \sum_{k,n \in \ZZ} |\langle f,M_{k}T_{n}T_{x_i}g \rangle_{L^{2}(\RR)}|^{2}&= \sum_{i=1}^{m} \sum_{k,n \in \ZZ} |\langle Zf,Z(M_{k}T_{n}T_{x_i}g)\rangle_{L^{2}(Q)} |^{2} \\ 
&= \sum_{i=1}^{m} \sum_{k,n \in \ZZ} |\langle Zf,E_{k,n} Z(T_{x_i}g)\rangle_{L^{2}(Q)}|^{2} \\
&= \sum_{i=1}^{m} \sum_{k,n \in \ZZ} |\langle Zf \times \overline{Z(T_{x_i}g)},E_{k,n} )\rangle_{L^{2}(Q)}|^{2}
\end{align*}
where the first equality follows from the unitary property of the Zak transform.
    Now using the fact that $\{E_{k,n}: k, n \in \ZZ\}$ forms an orthonormal basis for $L^2(Q),$ we have
    \begin{align*}
    \sum_{i=1}^{m} \sum_{k,n \in \ZZ} |\langle f,M_{k}T_{n}T_{x_i}g\rangle_{L^{2}(\RR)}|^{2} 
   &= \sum_{i=1}^{m} \|Zf \times Z(\overline{T_{x_i}g}) \|^{2}\\
   &= \sum_{i=1}^{m} \int_{Q} |Zf(t,\xi)|^{2} |Z(T_{x_i}g)(t,\xi)|^{2} dt d \xi \\
   &= \int_{Q} |Zf(t,\xi)|^{2}(\sum_{i=1}^{m}|Z(T_{x_i}g)(t,\xi)|^{2})dt d \xi.
\end{align*}
Since, we have that there are $A,B>0$ such that 
$$ A \leq \sum_{i=1}^{m} |Z(T_{x_i}g)(t,\xi)|^{2} \leq B \mbox{ for a.e. } (t,\xi) \in Q, $$
then 
$$ A \|f\|^2 \leq \sum_{i=1}^{m} \sum_{k,n \in \ZZ} |\langle f,M_{k}T_{n}T_{x_i}g\rangle_{L^{2}(\RR)}|^{2} \leq B \|f\|^{2}.$$
\end{proof} 
To achieve such a deterministic result, we must select points that satisfy (\ref{ch2:sufcond}). However, determining these points manually would be an arduous task, especially since the required number of points is not known in advance. To circumvent this difficulty, we approach this inequality probabilistically by considering the points $x_{i}$ as random variables.

Consider a set of points $x_{1}, x_{2}, \ldots, x_{m}$ that are uniformly and independently distributed in the unit interval $ [0,1).$ For each $i=1,2,\ldots,m ,$ the functions $|Z(T_{x_{i}}g)(x,\xi)|^{2}$ determine a random variable for all $(x,\xi) \in Q.$ Consequently, $(\ref{ch2:sufcond})$ constitutes an event for us. We call this event E, 
 \begin{equation}
 \label{eventE}\mbox{E } \equiv A \leq \sum_{i=1}^{m} |Z(T_{x_i}g)(t,\xi)|^{2} \leq B \mbox{ for a.e. } (t,\xi) \in Q .\end{equation}
Simply put, our next result—which is also the main result of this article—states that the event E occurs with high probability, provided that a sufficient number of points are selected from the unit interval $[0,1)$. The precise formulation follows.
\begin{theorem}
\label{maintheorem}
Let $\{x_1, x_2, \ldots , x_m\}$ be independent random variables distributed uniformly in the unit interval $[0,1).$ Assume that $g \in L^{2}(\RR)$ satsify the following-
\begin{enumerate}
    \item (Periodic Summability)
    \begin{align*}
       K=\sup_{t \in [0,1]} \sum_{l \in \ZZ}|g(t+l)|< \infty\mbox{ and } & K'=\sup_{\xi \in [0,1]} \sum_{l \in \ZZ}|\widehat{g}(\xi+l)|< \infty.
    \end{align*}
    \item (Square summability away from zero).
    There exists $q, R > 0$ such that  
    \begin{equation*}
        q \leq \sum_{l \in \ZZ} |\widehat{g}(\xi + l)|^2 \leq R \mbox{ for all } \xi \in [0,1].
    \end{equation*}
    \item (Lipschitz-type boundedness of Zak transform)
    \begin{equation*}
       C= \sup_{(t,\xi)\neq (t',\xi') \in Q}\frac{|Zg(t,\xi)-Zg(t',\xi')|}{\|(t,\xi)-(t',\xi')\|_{\infty}} < \infty.
    \end{equation*}
\end{enumerate}
If $\alpha < \frac{q}{2}$, $\beta > \frac{q}{2},$ $0 < \varepsilon < 1,$ and \\     $m > \max\{\frac{K^4}{2(\beta-\frac{q}{2})^{2}} \ln{\left(\frac{2(\frac{4CK}{q}+1)^{2}}{\varepsilon}\right)}, \frac{K^4}{2(\alpha-\frac{q}{2})^{2}} \ln{\left(\frac{2(\frac{4CK}{q}+1)^{2}}{\varepsilon}\right)},$ then 
\begin{equation*}
    m \alpha \|f\|^{2} \leq \sum_{i=1}^{m} \sum_{k,n \in \ZZ} |\langle f,M_{k}T_{n+{x_{i}}}g \rangle|^{2} \leq m \beta \|f\|^{2}
\end{equation*}
holds for all $f \in L^{2}(\RR)$ with probability atleast $1-\varepsilon.$
\end{theorem}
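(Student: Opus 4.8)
The plan is to combine Lemma~\ref{equiprob} with a pointwise concentration estimate, which is then made uniform over $Q$ by a covering argument. By Lemma~\ref{equiprob} it suffices to show that, with probability at least $1-\varepsilon$,
\begin{equation*}
m\alpha \le S(t,\xi):=\sum_{i=1}^{m} |Z(T_{x_i}g)(t,\xi)|^2 \le m\beta \quad \text{for a.e. } (t,\xi)\in Q.
\end{equation*}
First I would record the identity $Z(T_x g)(t,\xi)=Zg(t-x,\xi)$, so that for each fixed $(t,\xi)$ the summands $X_i:=|Zg(t-x_i,\xi)|^2$ are independent and identically distributed. Periodic summability (condition (1)) gives $|Zg(t,\xi)|\le \sum_{k\in\ZZ}|g(t+k)|\le K$, whence $0\le X_i\le K^2$; this supplies the range $(b_i-a_i)^2=K^4$ required by Hoeffding's inequality.

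The next step is the common mean. Since $g,\widehat g\in W(\RR)$ by condition (1), Proposition~\ref{zakswitch} gives $|Zg(t,\xi)|=|Z\widehat g(\xi,-t)|$, and $|Zg(\cdot,\xi)|^2$ is $1$-periodic in its first argument. Using that the $x_i$ are uniform on $[0,1)$ together with Parseval on the fiber, I obtain
\begin{equation*}
\mathbb{E}[X_i]=\int_0^1 |Zg(s,\xi)|^2\,ds=\int_0^1\Big|\sum_{k\in\ZZ}\widehat g(\xi+k)e^{-2\pi i ks}\Big|^2 ds=\sum_{k\in\ZZ}|\widehat g(\xi+k)|^2,
\end{equation*}
which by condition (2) lies in $[q,R]$ for every $\xi$. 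Thus $\mathbb E[\tfrac1m S(t,\xi)]\in[q,R]$, and Lemma~\ref{hoeffding} applied to $\overline X=\tfrac1m S$ yields, for each fixed $(t,\xi)$ and each margin $s>0$, a two-sided tail bound of the form $2\exp(-2ms^2/K^4)$.

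The heart of the matter -- and the step I expect to be the main obstacle -- is upgrading this pointwise estimate to one valid simultaneously for a.e. $(t,\xi)\in Q$. Here condition (3) is essential: from $\big|\,|a|^2-|b|^2\big|\le 2K|a-b|$ together with the Lipschitz bound for $Zg$ one sees that $\tfrac1m S$ is Lipschitz on $Q$ with constant $2KC$, uniformly in the realization of the $x_i$ (the reduction of $t-x_i$ to $Q$ via the quasi-periodicity $Zg(t+1,\xi)=e^{-2\pi i\xi}Zg(t,\xi)$ is a technical point to be handled with care). I would then fix a grid net of $Q$ of mesh $\delta=q/(4KC)$, so that it has at most $(\tfrac{4CK}{q}+1)^2$ points and every $(t,\xi)$ lies within $\ell^\infty$-distance $\delta$ of a net point, incurring a discretization error of at most $2KC\delta=q/2$ in $\tfrac1m S$.

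Finally I would apply the two-sided pointwise bound at each net point and take a union bound over the $(\tfrac{4CK}{q}+1)^2$ points. Absorbing the discretization error $q/2$ into the thresholds is exactly what forces $\alpha<\tfrac q2<\beta$: on the good event, for the lower bound, $\tfrac1m S(t,\xi)\ge \mathbb E[\tfrac1m S]-s-\tfrac q2\ge q-s-\tfrac q2=\tfrac q2-s\ge\alpha$ upon choosing the margin $s=\tfrac q2-\alpha$, and an analogous upper-tail estimate with margin $\beta-\tfrac q2$. Setting the union-bounded failure probability $2(\tfrac{4CK}{q}+1)^2\exp(-2ms^2/K^4)\le\varepsilon$ and solving for $m$ gives, for each margin, the threshold $m>\frac{K^4}{2s^2}\ln\!\big(2(\tfrac{4CK}{q}+1)^2/\varepsilon\big)$; taking the maximum over the two margins $\tfrac q2-\alpha$ and $\beta-\tfrac q2$ reproduces the stated condition on $m$, and the conclusion then follows from Lemma~\ref{equiprob}.
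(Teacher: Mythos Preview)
Your outline is essentially the paper's proof: reduce via Lemma~\ref{equiprob}, compute $\mathbb{E}[X_i]=\sum_{l}|\widehat g(\xi+l)|^2$ using Proposition~\ref{zakswitch} and Parseval, bound $0\le X_i\le K^2$, cover $Q$ by the grid $M_\delta=\delta\ZZ^2\cap[0,1]^2$ with $\delta=q/(4CK)$ (so $|M_\delta|\le(4CK/q+1)^2$), and then combine Hoeffding at each net point with a union bound, treating the two tails $E_1^c$ and $E_2^c$ separately with margins $\beta-\tfrac{q}{2}$ and $\tfrac{q}{2}-\alpha$. The paper follows exactly this sequence with the same constants; the quasi-periodicity issue you flag when transferring the Lipschitz bound from $Zg$ to $Z(T_{x_i}g)$ is also left implicit there.
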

A few remarks are in order. First, it can be noted that under the first assumption, the Zak transform $Z{g}(t,\xi)$ is well defined for every $t$ and $\xi$. Also, the hypothesis of Proposition \ref{zakswitch} is satisfied; thus it is allowed to write $Z{g}(t,\xi)= e^{2 \pi i t \xi} Z{\widehat{g}}(\xi,-t)$ for all $t $ and $\xi.$ Also note that the constants $\alpha$ and $\beta$ can be taken sufficiently close to each other, i.e., there ratio is close to one. In engineering literature, this is referred to as \textit{snugness} of the frame. It allows for a numerically efficient recovery of a function.

Let us briefly outline the approach to proving the theorem. The first step is two bring the original problem down to a problem in $L^2(Q).$ This is done in the Lemma $\ref{equiprob}.$ Next, we establish that $(\ref{eventE}),$ with $A= m\alpha$ and $B=m\beta,$ occurs with high probability. The random variables involved in the event E are shown to be bounded, with their expectations also demonstrated to be bounded. Additionally, a covering bound is introduced for a technical resaon which is discussed in the proof. With these preparations in place, we apply Hoeffding’s inequality, leading to our result.
\begin{proof}
     As mentioned in the discussion above this theorem, $|Z(T_{x_{i}}g)(t,\xi)|^{2}$ is a function of a random variable and is therefore a random variable for all $i$ and $(t,\xi).$ So we first calculate the expectation for these random variables. For $i=1,2,\ldots , m$ and $(t, \xi) \in Q$
    \begin{align*}
        \mathbb{E}[|Z(T_{x_{i}}g)(t,\xi)|^{2}] 
        &= \int_{0}^{1} |Z(T_{x}g)(t,\xi)|^{2} dx \\
        &= \int_{0}^{1} |Zg(t-x,\xi)|^{2} dx.
    \end{align*}
    Using the fact that $Zg(t, \xi)= e^{2 \pi i t \xi} Z \widehat{g}(\xi,-t),$ we write the above expression as
    \begin{align*}
        \mathbb{E}[|Z(T_{x_{i}}g)(t,\xi)|^{2}] &= \int_{0}^{1} |Z \widehat{g}(\xi, x-t)|^{2} dx\\
        &= \int_{0}^{1} \left(\sum_{k \in \ZZ} \widehat{g}(\xi +k) e^{2 \pi i k (x-t)}\right) \overline{\left(\sum_{n \in \ZZ} \widehat{g}(\xi +n) e^{2 \pi i n (x-t)}\right)}dx\\
        &= \int_{0}^{1} \sum_{k,n \in \ZZ}  \widehat{g}(\xi + k) \overline{\widehat{g}(\xi +n)}e^{2 \pi i (k-n)(x-t)}dx\\
        &= \sum_{l \in \ZZ}|\widehat{g}(\xi + l)|^{2} \leq R \mbox{ for all $\xi \in [0,1].$ }   
    \end{align*}
    By the assumptions on $g,$ $$|Z(T_{x_{i}}g)(t,\xi)|^{2} \leq K^{2} \mbox{ for all }(t, \xi) \in Q , \mbox{ and }  i=1,2,\ldots ,m.$$
    The above arguments establish the boundedness of the random variables and their expectations.
    
    Note that event E is equal to E$_{1} \cap$E$_{2},$ where 
   \begin{equation*}\mbox{E}_{1} \equiv \esssup_{(t,\xi)\in Q}\sum_{i=1}^{m} |Z(T_{x_i}g)(t,\xi)|^{2} \leq B \mbox{ and } \mbox{E}_{2}\equiv \essinf_{(t,\xi)\in Q}\sum_{i=1}^{m} |Z(T_{x_i}g)(t,\xi)|^{2} \geq A.\end{equation*}
   As it is apparent, these events are varying over uncountably many random variables, and to estimate probability in its present form is virtually impossible. To overcome this difficulty, we make use of Assumption $(3)$ along with cover bound to bring down the estimate for uncountably many random variables to finitely many of them.
   
\textbf{Cover Bound}: Consider a finite cover $M_{\delta} $ in $[0,1] \times [0,1]$ such that for any $(t,\xi) \in Q$, there exists $(t', \xi ') \in M_{\delta}$ which satisfies $\|(t, \xi)-(t', \xi ')\|_{\infty} \leq \delta.$ In fact, take $M_{\delta}=  \delta \ZZ^{2} \cap [0,1]\times [0,1]. $ We have chosen such a uniform mesh because it is relatively easier to keep track of the number of points for such a choice of $M_{\delta}$.\par
For any $(t,\xi) \in Q,$ pick $(t', \xi') \in M_{\delta}$ such that $\|(t, \xi)-(t', \xi ')\|_{\infty} \leq \delta.$ Consider the difference
\begin{align*}
    ||Z(T_{x_{i}}g)(t,\xi)|^{2}-|Z(T_{x_{i}}g)(t',\xi ')|^{2}| &\leq (|Z(T_{x_{i}}g)(t,\xi)|+ |Z(T_{x_{i}}g)(t',\xi ')|) \times \\ &|Z(T_{x_{i}}g)(t,\xi)-Z(T_{x_{i}}g)(t',\xi')| \\
& \leq 2K|Z(T_{x_{i}}g)(t,\xi)-Z(T_{x_{i}}g)(t',\xi')|
\end{align*}
By the assumption of Lipschitz type boundedness of the Zak transform, we have
\begin{align*}
||Z(T_{x_{i}}g)(t,\xi)|^{2}-|Z(T_{x_{i}}g)(t',\xi ')|^{2}| &\leq 2 K C \|(t,\xi)-(t', \xi')\|_{\infty}\\
& \leq 2 K C \delta.
\end{align*}
Choosing $ \delta = \frac{q}{4KC},$ we have

   $$ -\frac{q}{2}  \leq |Z(T_{x_{i}}g)(t,\xi)|^{2}-|Z(T_{x_{i}}g)(t',\xi ')|^{2}  \leq \frac{q}{2} $$
   $$ |Z(T_{x_{i}}g)(t',\xi ')|^{2} -\frac{q}{2}  \leq |Z(T_{x_{i}}g)(t,\xi)|^{2}  \leq |Z(T_{x_{i}}g)(t',\xi ')|^{2} +\frac{q}{2}. $$
   Thus, 
   \begin{align}{\label{mesheq}} \begin{split}\min_{(t', \xi') \in M_{\delta}} \sum_{i=1}^{m} |Z(T_{x_{i}}g)(t',\xi ')|^{2}-\frac{mq}{2} & \leq \sum_{i=1}^{m} |Z(T_{x_{i}}g)(t,\xi)|^{2} \\ &\leq \max_{(t', \xi') \in M_{\delta}} \sum_{i=1}^{m} |Z(T_{x_{i}}g)(t',\xi ')|^{2}+\frac{mq}{2}\end{split}\end{align}
   for all $(t,\xi) \in Q.$ \par
   
   \textbf{Probability Bounds}: We estimate the probability of the events E$_{1}^{c}$ and E$_{2}^{c}$ and show that they can be made sufficiently small by increasing the number of samples, which, in turn, will show that the event E holds with high probability.
   \begin{align*}
       \mathbb{P}(\mbox{E}_{1}^{c}) &\leq\mathbb{P}\left(\esssup_{(t,\xi)\in Q}\sum_{i=1}^{m} |Z(T_{x_i}g)(t,\xi)|^{2} > B\right) \\
       & \leq \mathbb{P}\left(\max_{(t', \xi') \in M_{\delta}} \sum_{i=1}^{m} |Z(T_{x_{i}}g)(t',\xi ')|^{2}+\frac{mq}{2} > B\right) \hspace{2cm} \mbox{using (\ref{mesheq})}\\
       & \leq \sum_{(t',\xi') \in M_{\delta}}\mathbb{P}\left(\sum_{i=1}^{m} |Z(T_{x_{i}}g)(t',\xi ')|^{2}+\frac{mq}{2} > B\right)\\
       &=\sum_{(t',\xi') \in M_{\delta}}\mathbb{P}\left( \frac{1}{m}\sum_{i=1}^{m} |Z(T_{x_{i}}g)(t',\xi ')|^{2} -\sum_{l \in \ZZ}|\widehat{g}(\xi' +l)|^{2} > \beta-\frac{q}{2}-\sum_{l \in \ZZ}|\widehat{g}(\xi' +l)|^{2}\right)\\
       & \leq \sum_{(t',\xi') \in M_{\delta}}\mathbb{P}\left(\frac{1}{m}\sum_{i=1}^{m} |Z(T_{x_{i}}g)(t',\xi ')|^{2} -\sum_{l \in \ZZ}|\widehat{g}(\xi' +l)|^{2} > \beta-\frac{q}{2}\right)\\
   \end{align*}
   On application of Lemma \ref{hoeffding}, we get
$$\mathbb{P}(\mbox{E}_{1}^{c})  \leq \left(\frac{4CK}{q}+1\right)^{2} \exp{\left(\frac{-2m(\beta-\frac{q}{2})^2}{K^4}\right)}. $$
   Now, $$\left(\frac{4CK}{q}+1\right)^{2} \exp{\left(\frac{-2m(\beta-\frac{q}{2})^2}{K^4}\right)} < \frac{\varepsilon}{2},  \mbox{ if }  m > \frac{K^4}{2(\beta-\frac{q}{2})^{2}} \ln{\left(\frac{2(\frac{4CK}{q}+1)^{2}}{\varepsilon}\right)}.$$
   Therefore, 
   $$ \mathbb{P}(\mbox{E}_{1}^{c}) < \frac{\varepsilon}{2}, \mbox{ when }  m > \frac{K^4}{2(\beta-\frac{q}{2})^{2}} \ln{\left(\frac{2(\frac{4CK}{q}+1)^{2}}{\varepsilon}\right)}.$$
   
  \par Similarly,
   \begin{align*}
       \mathbb{P}(\mbox{E}_{2}^{c}) &\leq\mathbb{P}\left(\essinf_{(t,\xi)\in Q}\sum_{i=1}^{m} |Z(T_{x_i}g)(t,\xi)|^{2} < A\right) \\
       & \leq \mathbb{P}\left(\min_{(t', \xi') \in M_{\delta}} \sum_{i=1}^{m} |Z(T_{x_{i}}g)(t',\xi ')|^{2}-\frac{mq}{2} < A\right) \hspace{2cm} \mbox{using (\ref{mesheq})}\\
       & \leq \sum_{(t',\xi') \in M_{\delta}}\mathbb{P}\left( \sum_{i=1}^{m} |Z(T_{x_{i}}g)(t',\xi ')|^{2}-\frac{mq}{2} < A\right)\\
       &=\sum_{(t',\xi') \in M_{\delta}}\mathbb{P}\left( \frac{1}{m}\sum_{i=1}^{m} |Z(T_{x_{i}}g)(t',\xi ')|^{2} -\sum_{l \in \ZZ}|\widehat{g}(\xi' +l)|^{2}< \alpha+\frac{q}{2}-\sum_{l \in \ZZ}|\widehat{g}(\xi' +l)|^{2}\right)\\
       & = \sum_{(t',\xi') \in M_{\delta}}\mathbb{P}\left(\frac{1}{m}\sum_{i=1}^{m} -|Z(T_{x_{i}}g)(t',\xi ')|^{2} +\sum_{l \in \ZZ}|\widehat{g}(\xi' +l)|^{2}> -\alpha-\frac{q}{2}+\sum_{l \in \ZZ}|\widehat{g}(\xi' +l)|^{2}\right)\\
       \end{align*}
       Now using Assumption 2, followed by an application of Lemma \ref{hoeffding}, we get 
       \begin{align*}
      \mathbb{P}(\mbox{E}_{2}^{c}) & \leq \sum_{(t',\xi') \in M_{\delta}}\mathbb{P}\left( \frac{1}{m}\sum_{i=1}^{m} -|Z(T_{x_{i}}g)(t',\xi ')|^{2} +\sum_{l \in \ZZ}|\widehat{g}(\xi' +l)|^{2}> -\alpha+\frac{q}{2}\right)\\ 
       & \leq \left(\frac{4CK}{q}+1\right)^{2} \exp{\left(\frac{-2m(\alpha-\frac{q}{2})^2}{K^4}\right)}.
   \end{align*}
   $$ \left(\frac{4CK}{q}+1\right)^{2} \exp{\left(\frac{-2m(\alpha-\frac{q}{2})^2}{K^4}\right)}  < \frac{\varepsilon}{2} \mbox{, if } m > \frac{K^4}{2(\alpha-\frac{q}{2})^{2}} \ln{\left(\frac{2(\frac{4CK}{q}+1)^{2}}{\varepsilon}\right)}$$
   Thus, 
   $$\mathbb{P}(\mbox{E}_{2}^{c}) < \frac{\varepsilon}{2}, \mbox{ when }  m > \frac{K^4}{2(\alpha-\frac{q}{2})^{2}} \ln{\left(\frac{2(\frac{4CK}{q}+1)^{2}}{\varepsilon}\right)}$$
   Having the probability estimates at our disposal for the events E$_{1}^{c}$ and E$_{2}^{c},$ we evaluate the probability for the event E.

   If $m > \max\{\frac{K^4}{2(\alpha-\frac{q}{2})^{2}} \ln{\left(\frac{2}{\varepsilon}(\frac{4CK}{q}+1)^{2}\right)}, \frac{K^4}{2(\beta-\frac{q}{2})^{2}} \ln{\left(\frac{2}{\varepsilon}(\frac{4CK}{q}+1)^{2}\right)}\},$ then
   \begin{align*}
       \mathbb{P}(\mbox{E})=\mathbb{P}(\mbox{E}_{1} \cap \mbox{E}_{2}) &=1-\mathbb{P}(\mbox{E}_{1}^{c} \cup \mbox{E}_{2}^{c})\\
       & \geq 1- \mathbb{P}(\mbox{E}_{1}^{c})-\mathbb{P}(\mbox{E}_{2}^{c})\\
       &> 1-\varepsilon.
       \end{align*}
       Hence, the frame inequality 
       \begin{equation*}
    m \alpha \|f\|^{2} \leq \sum_{i=1}^{m} \sum_{k,n \in \ZZ} |\langle f,M_{k}T_{n+{x_{i}}}g \rangle|^{2} \leq m \beta \|f\|^{2}
\end{equation*}
holds for all $f \in L^2(\RR)$ with probability atleast $1-\varepsilon.$

\end{proof}
 \section{Examples}
 A function $f$ defined on $\RR$ is said to be of \textbf{moderate decrease} if $f$ is continuous and there exists $A, \varepsilon >0$ such that 
 \begin{equation} \label{moddec}|f(t)| \leq \frac{A}{1+|t|^{1+\varepsilon}}~~~ \mbox{ for all } t \in \RR.\end{equation}
 Also, let $\Phi_{f}$ denote the periodic square summability of $f$ in the frequency domain, i.e., 
 $$\Phi_{f}(t)=\sum_{l \in \ZZ}|f(t+l)|^2, ~~ t \in [0,1].$$
 If the functions $f$ and $\widehat{f}$ are of moderate decrease, then due to their sufficient decay, Assumption 1 is satisfied. Moreover, the right-hand side inequality in Assumption 2 also follows. Now note that since $\widehat{f}$ is of moderate decrease, the function $\Phi_{f}$ is continuous. Hence, to ensure left-hand side of Assumption 2, it is enough to have that $\Phi_{f}$ is non-zero everywhere on the unit interval $[0,1].$  
 
 A straightforward application of Taylor's theorem tells us that if the partial derivatives of the real and imaginary part of the function $Zf(t,\xi)$ are bounded in $[0,1] \times [0,1],$ then the Assumption 3 is satisfied. Furthermore, if the function $f$ satisfies an additional decay
 \begin{equation} \label{suffdec}|f(\xi)| \leq \frac{B}{1+|\xi|^{2 + \varepsilon}} ~ \mbox{ and } ~ |\widehat{f}(\xi)| \leq \frac{C}{1+|\xi|^{2 + \delta}} ~ \mbox{ for all } \xi \in \RR,\end{equation} for some $B,C, \varepsilon, \delta>0$, then the existence and boundedness of partial derivatives of $Z{f}$ follows as a consequence.

 The following is a list of examples that satisfy the assumptions of the main theorem:
\begin{itemize}
    \item \textbf{Hermite functions:} These functions are defined as $h_{n}(x)=e^{\pi x^{2}} \frac{\mathrm{d}^{n}}{\mathrm{d}x^{n}}(e^{-2 \pi x^2}).$ Notably, they exhibit polynomial decay of every order due to the presence of a Gaussian factor. Also, note that these functions are invariant, upto a constant factor, under the operation of the Fourier transform. Consequently, they have finitely many zeroes on the real line. Moreover, conclusively, they satisfy (\ref{suffdec}). Hence, these functions satisfy all the assumptions and serve as a model example for our theorem. A careful reader might observe that no specific properties of Hermite functions have been explicitly used to verify that they satisfy the necessary assumptions. The key requirements are simply their sufficient decay and the fact that their Fourier transform has finitely many zeros. These insights naturally extend to a broader class of examples. 
    \item \textbf{Schwartz class functions with finitely many zeroes:} The inverse Fourier transform of a Schwartz class function that has only finitely many zeroes will satisfy all the assumptions.
     \item \textbf{Totally positive functions:} We use an equivalent definition of totally positive functions, which is a characterization due to Schoenberg \cite{Schoenberg1}. Also see \cite{Grochenigtpoverrationallattice}. An integrable function $f \in L^{2}(\RR)$ is said to be of totally positive if $f$ possesses the factorization
$$\widehat{f}(\xi)=c e^{-\gamma \xi^2}e^{2 \pi i \nu \xi} \prod_{j=1}^{N} (1+ 2 \pi i \nu_{j}\xi)^{-1}e^{-2 \pi i \nu_{j} \xi},$$
where $\gamma \geq 0,c>0,\nu, \nu_{j} \in \RR, N \in \NN \cup \{\infty\}.$ If $ \gamma > 0,$ then the decay conditions in the assumptions are met for all $N \in \NN,$ and when $\gamma =0,$ the assumptions are met for $N > 2.$ Indeed, the decay in the frequency domain is apparent, whereas in the time domain, it follows from a theorem of Paley and Wiener. One can also note that these functions $(\gamma =0, N \in \NN)$ lie outside of Schwartz class.
    \item \textbf{B-splines:}  The $ n $-th order B-splines are defined as the $(n+1)$-fold convolution of the characteristic function on the unit interval $[0,1]$. We denote them by $ \beta^n $, explicitly given by:  
\[
\beta^n(t) = \underbrace{\left( \chi_{[0,1]} \ast \ldots \ast \chi_{[0,1]} \right)}_{(n+1) \text{-fold convolution}}(t).
\]
An important property of $ \beta^n(t) $ is that its Fourier transform $\widehat{\beta^n}(\xi)$ exhibits a decay rate bounded by $ |\xi|^{-(n+1)} $. This ensures that $\widehat{\beta^n}$  possesses sufficient decay, allowing it to meet the assumptions required in the main theorem.

\end{itemize}
\section*{Acknowledgement}
SS acknowledges the Anusandhan National Research Foundation, Government of India, for the financial support through MATRICS grant - MTR/2022/000383.

\bibliographystyle{abbrv}\bibliography{main.bib}

\begin{thebibliography}{10}

\bibitem{AkGroNonUniSampSiam}
A.~Aldroubi and K.~Gr\"ochenig.
\newblock Nonuniform sampling and reconstruction in shift-invariant spaces.
\newblock {\em SIAM Rev.}, 43(4):585--620, 2001.

\bibitem{romerorandomperiodic}
J.~A. Antezana, D.~A. Carbajal, and J.~L. Romero.
\newblock Random periodic sampling patterns for shift-invariant spaces.
\newblock {\em IEEE Trans. Inform. Theory}, 70(6):3855--3863, 2024.

\bibitem{oleframes}
O.~Christensen.
\newblock {\em An introduction to frames and {R}iesz bases}.
\newblock Applied and Numerical Harmonic Analysis. Birkh\"auser Boston, Inc.,
  Boston, MA, 2003.

\bibitem{TLWID}
I.~Daubechies.
\newblock {\em Ten lectures on wavelets}, volume~61 of {\em CBMS-NSF Regional
  Conference Series in Applied Mathematics}.
\newblock Society for Industrial and Applied Mathematics (SIAM), Philadelphia,
  PA, 1992.

\bibitem{PNEID}
I.~Daubechies, A.~Grossmann, and Y.~Meyer.
\newblock Painless nonorthogonal expansions.
\newblock {\em J. Math. Phys.}, 27(5):1271--1283, 1986.

\bibitem{dnsframe}
R.~J. Duffin and A.~C. Schaeffer.
\newblock A class of nonharmonic {F}ourier series.
\newblock {\em Trans. Amer. Math. Soc.}, 72:341--366, 1952.

\bibitem{gelfandeigenex}
I.~M. Gelfand.
\newblock Expansion in characteristic functions of an equation with periodic
  coefficients.
\newblock {\em Doklady Akad. Nauk SSSR (N.S.)}, 73:1117--1120, 1950.

\bibitem{goyalframewither}
V.~K. Goyal, J.~Kova\v{c}evi\'c, and J.~A. Kelner.
\newblock Quantized frame expansions with erasures.
\newblock {\em Appl. Comput. Harmon. Anal.}, 10(3):203--233, 2001.

\bibitem{nonlinappnielsen}
R.~Gribonval and M.~Nielsen.
\newblock Nonlinear approximation with dictionaries. {I}. {D}irect estimates.
\newblock {\em J. Fourier Anal. Appl.}, 10(1):51--71, 2004.

\bibitem{grobook}
K.~Gr\"ochenig.
\newblock {\em Foundations of time-frequency analysis}.
\newblock Applied and Numerical Harmonic Analysis. Birkh\"auser Boston, Inc.,
  Boston, MA, 2001.

\bibitem{grochenigmysgabframes}
K.~Gr\"ochenig.
\newblock The mystery of {G}abor frames.
\newblock {\em J. Fourier Anal. Appl.}, 20(4):865--895, 2014.

\bibitem{Grochenigtpoverrationallattice}
K.~Gr\"ochenig.
\newblock Totally positive functions and {G}abor frames over rational lattices.
\newblock {\em Adv. Math.}, 427:Paper No. 109113, 12, 2023.

\bibitem{GroHeilMSPDO}
K.~Gr\"ochenig and C.~Heil.
\newblock Modulation spaces and pseudodifferential operators.
\newblock {\em Integral Equations Operator Theory}, 34(4):439--457, 1999.

\bibitem{Grochenigtpfinite}
K.~Gr\"ochenig and J.~St\"ockler.
\newblock Gabor frames and totally positive functions.
\newblock {\em Duke Math. J.}, 162(6):1003--1031, 2013.

\bibitem{Hoeffdingineq}
W.~Hoeffding.
\newblock Probability inequalities for sums of bounded random variables.
\newblock {\em J. Amer. Statist. Assoc.}, 58:13--30, 1963.

\bibitem{Lemvigetalcounterhermite}
A.~Horst, J.~Lemvig, and A.~E. Videb\ae~k.
\newblock On the non-frame property of {G}abor systems with {H}ermite
  generators and the frame set conjecture.
\newblock {\em Appl. Comput. Harmon. Anal.}, 76:Paper No. 101747, 2025.

\bibitem{janssengaborrep}
A.~J. E.~M. Janssen.
\newblock Gabor representation of generalized functions.
\newblock {\em J. Math. Anal. Appl.}, 83(2):377--394, 1981.

\bibitem{janssentwosidedexp}
A.~J. E.~M. Janssen.
\newblock On generating tight {G}abor frames at critical density.
\newblock {\em J. Fourier Anal. Appl.}, 9(2):175--214, 2003.

\bibitem{janssenhyperbolicsecant}
A.~J. E.~M. Janssen and T.~Strohmer.
\newblock Hyperbolic secants yield {G}abor frames.
\newblock {\em Appl. Comput. Harmon. Anal.}, 12(2):259--267, 2002.

\bibitem{Lemvigcounterhermite}
J.~Lemvig.
\newblock On some {H}ermite series identities and their applications to {G}abor
  analysis.
\newblock {\em Monatsh. Math.}, 182(4):899--912, 2017.

\bibitem{Lyubarskigaussian}
Y.~I. Lyubarski{\u i}.
\newblock Frames in the {B}argmann space of entire functions.
\newblock In {\em Entire and subharmonic functions}, volume~11 of {\em Adv.
  Soviet Math.}, pages 167--180. Amer. Math. Soc., Providence, RI, 1992.

\bibitem{Ortega}
J.~Ortega-Cerd\`a and K.~Seip.
\newblock Fourier frames.
\newblock {\em Ann. of Math. (2)}, 155(3):789--806, 2002.

\bibitem{Schoenberg1}
I.~J. Schoenberg.
\newblock On {P}\'olya frequency functions. {I}. {T}he totally positive
  functions and their {L}aplace transforms.
\newblock {\em J. Analyse Math.}, 1:331--374, 1951.

\bibitem{SeipDensitythm1}
K.~Seip.
\newblock Density theorems for sampling and interpolation in the
  {B}argmann-{F}ock space. {I}.
\newblock {\em J. Reine Angew. Math.}, 429:91--106, 1992.

\bibitem{SeipDensitythm2}
K.~Seip.
\newblock Density theorems for sampling and interpolation in the
  {B}argmann-{F}ock space. {I}.
\newblock {\em J. Reine Angew. Math.}, 429:91--106, 1992.

\bibitem{voneumanncompleteness}
J.~von Neumann.
\newblock {\em Mathematische {G}rundlagen der {Q}uantenmechanik}, volume Band
  38 of {\em Die Grundlehren der mathematischen Wissenschaften}.
\newblock Springer-Verlag, Berlin-New York, 1968.
\newblock Unver\"anderter Nachdruck der ersten Auflage von 1932.

\bibitem{weiltransform}
A.~Weil.
\newblock Sur certains groupes d'op\'erateurs unitaires.
\newblock {\em Acta Math.}, 111:143--211, 1964.

\bibitem{youngboook}
R.~M. Young.
\newblock {\em An introduction to nonharmonic {F}ourier series}, volume~93 of
  {\em Pure and Applied Mathematics}.
\newblock Academic Press, Inc. [Harcourt Brace Jovanovich, Publishers], New
  York-London, 1980.

\bibitem{zakkqrep}
J.~Zak.
\newblock The kq-representation in the dynamics of electrons in solids.
\newblock volume~27 of {\em Solid State Physics}, pages 1--62. Academic Press,
  1972.

\end{thebibliography}

\end{document}